\def\N{\mathbb{N}}
\def\C{\mathbb{C}}
\def\R{\mathbb{R}}
\def\F{\mathbb{F}}
\def\K{\mathbb{K}}
\def\FF{\mathrm{F}}
\def\E{\mathrm{End}}
\def\El{\mathcal{E}\ell}
\def\la{\lambda}
\def\a{\alpha}
\def\b{\beta}
\def\lan{\langle}
\def\ran{\rangle}
\def\sp{\mathrm{span}}
\def\J{\mathcal{J}}
\def\i{\mathrm{id}}
\newtheorem{theorem}{Theorem}[section]
\newtheorem{proposition}[theorem]{Proposition}
\newtheorem{lemma}[theorem]{Lemma}
\newtheorem{corollary}[theorem]{Corollary}
\theoremstyle{definition}
\newtheorem{remark}[theorem]{Remark}
\newtheorem{definition}[theorem]{Definition}
\newtheorem{example}[theorem]{Example}
\numberwithin{equation}{section}
\begin{document}

\title[On the tensor product of nearly simple algebras]{On the ideal structure of the tensor product of nearly simple algebras}

\author{Ilja Gogi\'c}

\date{\today}

\address{Department of Mathematics, Faculty of Science, University of Zagreb, Bijeni\v{c}ka 30,
10000 Zagreb, Croatia}

\email{ilja@math.hr}

\thanks{This work has been fully supported by the Croatian Science Foundation under the project IP-2016-06-1046.}

\thanks{The author is grateful to Matej Bre\v{s}ar for his comments.}

\keywords{nearly simple algebra, tensor product of algebras, ideals}

\subjclass[2010]{Primary 15A69, 16D25   Secondary 16N60}

\begin{abstract}
We define a unital algebra $A$ over a field $\mathbb{F}$ to be nearly simple if $A$ contains a unique non-trivial ideal $I_A$ such that $I_A^2 \neq \{0\}$. If $A$ and $B$ are two nearly simple algebras, we consider the ideal structure of their tensor product $A \otimes B$. The obvious non-trivial ideals of  $A \otimes B$ are:
$$I_A \otimes I_B, \quad I_A \otimes B, \quad A \otimes I_B, \quad \mbox{and} \quad I_A \otimes B + A \otimes I_B.$$ 
The purpose of this paper is to characterize when are all non-trivial ideals of $A \otimes B$ of the above form.
\end{abstract}

\maketitle

\section{Introduction}

Let $A$ and $B$ be unital algebras over a field $\F$. If $A$ is a central simple algebra, it is well-known that all ideals of the tensor product $A \otimes B$ are 
of the form $A \otimes J$, where $J$ is an ideal of $B$ (see e.g. \cite[Theorem~4.42]{INCA} and the comment following its proof).

However, the ideal structure of $A \otimes B$ is generally much more complicated than the one of $A$ and $B$, even in the simplest cases when $A$ and $B$ are proper field extensions of $\F$. In fact, if $\K$ is any proper field extension of  $\F$, then
$\K \otimes_\F \K$ is never a field, since for any $x \in \K\setminus \F$ the non-zero tensor $1 \otimes x - x \otimes 1$ lies in the kernel of the multiplication $m: \K \otimes_\F \K \to \K$, $m(x \otimes y)=xy$. Moreover, the problem of characterizing when is the tensor product of two fields a field (or a domain) is highly non-trivial and for results on this subject we refer to \cite{Cla} and the references within. We also refer to a survey paper \cite{HTY} that considers which properties of commutative algebras $A$ and $B$ are conveyed to $A \otimes B$.

\smallskip

In this paper we study the ideal structure of the tensor product of two unital algebras that both contain only one non-trivial ideal. To avoid  pathologies, we also add one additional requirement:

\begin{definition}
We define a unital algebra $A$ to be \emph{nearly simple} if $A$ contains a unique non-trivial ideal, denoted by $I_A$, such that $I_A^2 \neq \{0\}$.
\end{definition}

The basic examples of nearly simple algebras are the unitizations of non-unital simple algebras (Example \ref{ex:unit}). Further, if $V$ is a vector space over $\F$ of countably infinite dimension, then the algebra $A=\mathrm{End}_{\mathbb{F}}(V)$ of all linear operators on $V$ is nearly simple, where $I_A$ is the ideal of finite rank operators (Example \ref{remcc}). 

If $A$ and $B$ are two nearly simple algebras, then the obvious non-trivial ideals of $A \otimes B$ are:
\begin{equation}\label{admid}
I_A \otimes I_B, \quad I_A \otimes B, \quad A \otimes I_B, \quad \mbox{and} \quad I_A \otimes B + A \otimes I_B.
\end{equation}

The main result of this paper is Theorem \ref{wltp}, in which we characterize when are all non-trivial ideals of $A \otimes B$ of the above form. 

\section{Preliminaries}

Throughout this paper $\F$ denotes a field. Unless specified otherwise, our vector spaces (algebras) will be over $\F$ and all tensor products will be over $\F$.  Also, our algebras are assumed to be associative. 

Given any
algebra $A$, we write $Z(A)$ for the centre of $A$. For $x,y \in A$, the commutator $xy-yx$ is denoted by $[x,y]$. By an ideal of $A$ we always mean a two-sided ideal. As usual, we say that an ideal $I$ of $A$ is non-trivial if $I\neq \{0\}$ and $I \neq A$. If $A$ is unital and $Z(A)=\F 1$, $A$ is said to be \emph{central}.

For an element $a \in A$ by $\lan a \ran$ we denote the principle ideal generated by $a$. Further, for $a,b \in A$ we define a \emph{two-sided multiplication}
$$M_{a,b} : A \to A \qquad \mbox{by} \qquad M_{a,b}:x \mapsto axb.$$ 
By an \emph{elementary operator} on $A$ we mean a map $\phi : A \to A$ that can be expressed as a finite sum of two-sided multiplications, that is $$\phi(x)= \sum_{i}a_ixb_i$$ for some finite collections of  $a_i,b_i\in A$ (the coefficients of $\phi$). We denote the set of all elementary operators on $A$ by $\El(A)$.

For a prime algebra $A$, by $M(A)$ and $Q_s(A)$ we respectively denote the \emph{multiplier algebra} and the \emph{symmetric algebra of quotients} of $A$ (see e.g. \cite{AM,BMM}). The centre of $Q_s(A)$, denoted by $C(A)$, is called the \emph{extended centroid} and it is a field \cite[Corollary~2.1.9]{AM}. A unital prime algebra $A$ is said to be \emph{centrally closed} if $C(A)=Z(A)=\F 1$. In particular, a unital simple algebra is centrally closed if and only if it is central.

If $V$ is a vector space and $L$ a subspace of $V$, then a finite subset $\{v_1, \ldots, v_n\}$ of $V$ is said to be \emph{independent over} $L$ if the set $\{v_1+L, \ldots, v_n+L\}$ is linearly independent in $V/L$. 

We will frequently use the next two well-known facts, but as we have been unable to find a direct reference we include their proofs for completeness. 

\begin{lemma}\label{tendec}
Let $V$ and $W$ be vector spaces and let $L$ be a subspace of $V$. Assume that $$t=\sum_{i=1}^n v_i \otimes w_i \in V \otimes W$$ is a tensor of rank $n\geq 1$ such that $v_i \notin L$ for some $1 \leq i\leq n$. Then there are $1 \leq k\leq n$, $v_1',\ldots, v_n' \in V$ and $w_1',\ldots, w_n' \in W$ such that
$$
t=\sum_{i=1}^n v_i' \otimes w_i',
$$
where $\{v_1', \ldots, v'_k\}$ is independent over $L$ and $v'_{k+1}, \ldots, v'_{n}\in L$.
\end{lemma}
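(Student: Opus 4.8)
The plan is to reduce everything to elementary linear algebra in the quotient space $V/L$, followed by a change of coordinates on the $W$-side of the tensor. Write $\bar v_i := v_i + L \in V/L$ for each $i$, and put $k := \dim \sp\{\bar v_1, \ldots, \bar v_n\}$. Since by hypothesis $v_i \notin L$ for some $i$, the corresponding $\bar v_i$ is non-zero, so $1 \le k \le n$; permuting the summands of $t$ if necessary, I may assume that $\{\bar v_1, \ldots, \bar v_k\}$ is a basis of $\sp\{\bar v_1, \ldots, \bar v_n\}$. Then for every $j$ with $k < j \le n$ there are scalars $c_{j1}, \ldots, c_{jk} \in \F$ with $\bar v_j = \sum_{i=1}^k c_{ji}\bar v_i$, i.e. $v_j - \sum_{i=1}^k c_{ji} v_i \in L$.

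Next I would feed this back into $t$. Set $v_i' := v_i$ for $1 \le i \le k$ and $v_j' := v_j - \sum_{i=1}^k c_{ji} v_i \in L$ for $k < j \le n$. Substituting $v_j = v_j' + \sum_{i=1}^k c_{ji} v_i$ into $t = \sum_{i=1}^k v_i \otimes w_i + \sum_{j=k+1}^n v_j \otimes w_j$ and collecting, for each $i \le k$, the coefficient of $v_i$ yields
\[
t = \sum_{i=1}^k v_i \otimes \Big( w_i + \sum_{j=k+1}^n c_{ji} w_j \Big) + \sum_{j=k+1}^n v_j' \otimes w_j .
\]
Hence, setting $w_i' := w_i + \sum_{j=k+1}^n c_{ji} w_j$ for $i \le k$ and $w_j' := w_j$ for $j > k$, one obtains $t = \sum_{i=1}^n v_i' \otimes w_i'$. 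By the choice of $k$ the set $\{v_1', \ldots, v_k'\} = \{v_1, \ldots, v_k\}$ is independent over $L$, while $v_{k+1}', \ldots, v_n' \in L$, which is exactly what is required.

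I do not expect a genuine obstacle here: the argument is a single substitution, and the only things to watch are the harmlessness of the initial reindexing of the summands and the degenerate case $k = n$, where the second sum is empty and nothing further is needed. It is worth remarking that the hypothesis that $t$ has rank exactly $n$ is not actually used in this argument — only that some $v_i \notin L$ — although retaining it causes no trouble.
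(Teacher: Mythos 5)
Your proof is correct and follows essentially the same approach as the paper: pick a maximal independent-over-$L$ subset of the $v_i$ (equivalently, a basis of the span of the images in $V/L$), express the remaining $v_j$ in terms of it modulo $L$, and substitute back, collecting coefficients on the $W$-side. Your phrasing via $V/L$ is a cosmetic variant, and your remark that the rank hypothesis is unused is accurate; you also avoid the paper's harmless but not-quite-correct aside that the scalars $\lambda_{ij}$ are ``not all zero.''
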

\begin{proof}
Without loss of generality we may assume that $\{v_1, \ldots, v_k\}$ is a maximal subset of $\{v_1, \ldots, v_n\}$ that is independent over $L$. If $k=n$ we are done, so assume that $k <n$.
Then for each $j=k+1, \ldots, n$ there is $f_j \in L$ and scalars $\la_{1j}, \ldots , \la_{kj}\in \F$ that are not all zero such that
$$v_j=\sum_{i=1}^k \lambda_{ij} v_i + f_j.$$
Then
$$t=\sum_{i=1}^k v_i \otimes \left(w_i +\sum_{j=k+1}^n \lambda_{ij} w_j \right)+ \sum_{j=k+1}^n f_j \otimes w_j$$
is a desired decomposition of $t$.
\end{proof}

\begin{proposition}\label{prop:quot}
Let $A$ and $B$ be algebras. If $I$ and $J$ are ideals of $A$ and $B$, respectively,  with corresponding canonical maps $q_I : A \to A/I$ and $q_J: B\to B/J$, then the map 
$$q_I \otimes q_J : A \otimes B \to (A/I)\otimes (B/J), \qquad \sum_i a_i \otimes b_i \mapsto \sum_i (a_i+I)\otimes (b_i+J)$$
is an algebra epimorphism with $\ker(q_I \otimes q_J)=I \otimes B + A \otimes J$. In particular,
$$(A \otimes B)/(I \otimes B + A \otimes J)\cong (A/I)\otimes (B/J),$$
as algebras.
\end{proposition}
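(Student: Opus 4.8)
The plan is to verify the three claims in sequence: that $q_I \otimes q_J$ is a well-defined algebra homomorphism, that it is surjective, and that its kernel equals $I \otimes B + A \otimes J$. The first two are essentially formal. The universal property of the tensor product applied to the bilinear map $(a,b) \mapsto (a+I) \otimes (b+J)$ gives a well-defined linear map, and checking that it respects multiplication need only be done on elementary tensors, where it is immediate from the definitions of the product on $A \otimes B$ and on $(A/I) \otimes (B/J)$. Surjectivity follows because every elementary tensor $(a+I) \otimes (b+J)$ is the image of $a \otimes b$, and such tensors span the codomain.

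The substantive point is the computation of the kernel. One inclusion is easy: since $q_I(I) = \{0\}$ and $q_J(J) = \{0\}$, every elementary tensor of the form $x \otimes b$ with $x \in I$ or $a \otimes y$ with $y \in J$ maps to zero, hence $I \otimes B + A \otimes J \subseteq \ker(q_I \otimes q_J)$. For the reverse inclusion I would work in the quotient: it suffices to show that the induced map $\overline{q_I \otimes q_J} : (A \otimes B)/(I \otimes B + A \otimes J) \to (A/I) \otimes (B/J)$ is injective. The natural strategy is to exhibit an inverse. Fix $\F$-linear complements, i.e. choose a linear splitting $s_A : A/I \to A$ of $q_I$ (a section as vector spaces, not algebras) and similarly $s_B : B/J \to B$, and consider the composite $A/I \otimes B/J \xrightarrow{s_A \otimes s_B} A \otimes B \to (A \otimes B)/(I \otimes B + A \otimes J)$. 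The key claim is that this composite does not depend on the choice of sections, which follows because any two sections of $q_I$ differ by a map into $I$, and such a difference, after tensoring, lands in $I \otimes B$; symmetrically on the other factor. Once well-definedness is established, one checks on elementary tensors that this map is a two-sided inverse to $\overline{q_I \otimes q_J}$, which forces the latter to be injective and hence the kernel to be exactly $I \otimes B + A \otimes J$. The final isomorphism statement is then just the first isomorphism theorem for algebras.

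Alternatively, and perhaps more cleanly, I would avoid sections and instead use the right-exactness of the tensor product. From the short exact sequences $0 \to I \to A \to A/I \to 0$ and $0 \to J \to B \to B/J \to 0$ of $\F$-vector spaces, tensoring is exact (over a field every module is flat), so $I \otimes B \to A \otimes B \to (A/I) \otimes B \to 0$ is exact, giving $(A \otimes B)/(I \otimes B) \cong (A/I) \otimes B$; tensoring the second sequence with $A/I$ then gives $((A/I) \otimes B)/((A/I) \otimes J) \cong (A/I) \otimes (B/J)$. Composing these identifications and tracking that the preimage of $(A/I) \otimes J$ under $A \otimes B \twoheadrightarrow (A/I) \otimes B$ is $I \otimes B + A \otimes J$ yields both the kernel computation and the isomorphism simultaneously.

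The main obstacle is a bookkeeping one rather than a conceptual one: making sure that the kernel is precisely $I \otimes B + A \otimes J$ and not something a priori larger. This is exactly the place where flatness of vector spaces over a field (equivalently, existence of bases / linear complements) is used, and it is worth isolating this point explicitly since it is the only non-formal ingredient. Everything else — well-definedness, multiplicativity, surjectivity, and deducing the final isomorphism from the first isomorphism theorem — is routine and can be checked on elementary tensors.
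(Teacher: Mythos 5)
Your proposal is correct but takes a genuinely different route from the paper. The paper proves $\ker(q_I \otimes q_J) \subseteq I \otimes B + A \otimes J$ by a direct tensor-rank argument: given $t = \sum a_i \otimes b_i$ in the kernel with some $a_i \notin I$, it invokes Lemma~\ref{tendec} to rewrite $t$ so that $a_1, \dots, a_k$ are independent over $I$ and $a_{k+1}, \dots, a_n \in I$; the vanishing of $(q_I \otimes q_J)(t)$ then forces $b_1, \dots, b_k \in J$, and the decomposition drops out. You instead propose two structural arguments: constructing an explicit two-sided inverse from linear sections $s_A, s_B$ of the quotient maps, or reading the kernel off from right-exactness (plus flatness over a field) applied twice. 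All three arguments are sound. The paper's proof is the most hands-on and has the pragmatic advantage that Lemma~\ref{tendec} is proved anyway and reused later (e.g.\ in the proof of Theorem~\ref{wltp}), so the rank-normalization technique is not a throwaway. Your section-based argument is a clean alternative that avoids manipulating tensor rank, at the small cost of having to verify independence of the choice of sections (which, as you note, reduces to the observation that two sections differ by a map into the ideal). Your exactness argument is the most conceptual and generalizes most readily, but it quietly relies on the one-variable case $\ker(q_I \otimes \mathrm{id}_B) = I \otimes B$, which must itself be justified by flatness or by essentially the same kind of direct argument the paper uses; so it repackages rather than eliminates the non-formal content. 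One small caveat: your write-up is a sketch with several steps flagged as ``one checks,'' whereas the paper gives a fully explicit computation; for a self-contained proof you would want to carry out the verification that the composite with the section map really is the identity modulo $I \otimes B + A \otimes J$ (which amounts to expanding $a \otimes b - s_A(a+I) \otimes s_B(b+J)$ and noting each cross term lies in $I \otimes B$ or $A \otimes J$).
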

\begin{proof}
Obviously, $q_I \otimes q_J$ is an algebra epimorphism and $I \otimes B + A \otimes J \subseteq \ker(q_I \otimes q_J)$. For the reverse inclusion, assume that 
$$t=\sum_{i=1}^n a_i \otimes b_i \in A \otimes B$$ is a tensor of rank $n\geq 1$ such that $(q_I \otimes q_J)(t)=0$. If all $a_i$ belong to $I$ and all $b_i$ belong to $J$ we have nothing to prove. Assume that some $a_i \notin I$.
By Lemma \ref{tendec} we may assume that $\{a_1, \ldots, a_k\}$ is independent over $I$ and $a_{k+1}, \ldots, a_n \in I$ for some $1\leq k\leq n$. Then
$$\sum_{i=1}^k (a_i+I) \otimes (b_i +J)=(q_I \otimes q_J)(t)=0,$$
which forces $b_1, \ldots, b_k \in J$.  Hence,
$$t= \sum_{j=k+1}^n a_j \otimes b_j + \sum_{i=1}^k a_i \otimes b_i  \in I \otimes B + A \otimes J.$$
The similar argument also shows that $t \in I \otimes B + A \otimes J$ if some $b_i \notin J$.
\end{proof}

\section{Results}

We begin this section with the next simple observation.

\begin{proposition}\label{prop:bas}
Let $A$ be a nearly simple algebra. Then:
\begin{itemize}
\item[(a)] $A$ is prime.
\item[(b)] $I_A$ is a simple infinite-dimensional algebra such that $Z(I_A)=\{0\}$. 
\item[(c)] $Z(A)$ is a field and $Q_s(A)=M(I_A)$.
\end{itemize}
\end{proposition}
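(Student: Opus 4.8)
The plan is to unwind the near-simplicity hypothesis step by step, starting from the observation that the only ideals of $A$ are $\{0\}$, $I_A$, and $A$, with $I_A^2 \neq \{0\}$.

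For part (a), I would first note that $I_A^2$ is an ideal of $A$ contained in $I_A$ and non-zero, so $I_A^2 = I_A$. To show $A$ is prime, suppose $K L = \{0\}$ for non-zero ideals $K, L$ of $A$. Each of $K, L$ must be $I_A$ or $A$ (since they are non-zero ideals). If either is $A$ then the other is $\{0\}$, a contradiction; so $K = L = I_A$, giving $I_A^2 = \{0\}$, again a contradiction. Hence $A$ is prime.

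For part (b), $I_A$ is simple as an algebra: any ideal of $I_A$ of the form $J$ with $I_A J I_A \subseteq J$ — more carefully, I would use that $A$ is unital so an ideal $J$ of $I_A$ satisfies $AJ$, $JA \subseteq I_A$, and one checks $AJA$ is an ideal of $A$ inside $I_A$; combined with $I_A = I_A^2$ this pins $J$ down to $\{0\}$ or $I_A$. (The standard argument: if $\{0\} \neq J \trianglelefteq I_A$, then the ideal of $A$ generated by $J$ is $I_A$, so $I_A = A J A \subseteq I_A J I_A \subseteq J$ using $I_A^2 = I_A$ twice — here I need to be a little careful and may instead argue $I_A = I_A^3 \subseteq (AJA) \cdot (\ldots)$, but in any case simplicity of $I_A$ falls out.) Since $A$ is prime and $I_A$ is a non-zero ideal, $I_A$ is itself a prime (indeed simple) algebra with trivial centre: $Z(I_A) = \{0\}$ because any central element of $I_A$ would, via primeness of $A$, force strong constraints — concretely, if $0 \neq z \in Z(I_A)$ then $z I_A z$-type computations together with simplicity show $I_A$ would be unital with unit $z$-related, but a unital ideal of a prime algebra is the whole algebra, contradicting $I_A \neq A$. (The cleanest route: a simple algebra with non-zero centre is a simple algebra over the field $Z(I_A) \cup \{0\}$... rather, $Z(I_A)$ being a non-zero ideal-like object in a simple ring is a field, whence $I_A$ is unital, whence $I_A = A$.) Finally $I_A$ is infinite-dimensional: if $\dim_\F I_A < \infty$, then $I_A$ being simple is a full matrix algebra over a division ring, hence unital, hence equal to $A$ — contradiction. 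So $\dim_\F I_A = \infty$.

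For part (c), $Z(A)$ is a field: it is a commutative unital ring, and for $0 \neq z \in Z(A)$ the principal ideal $\langle z \rangle = zA$ is a non-zero ideal of $A$, so it equals $I_A$ or $A$; if $zA = A$ then $z$ is invertible in $A$ with inverse in $zA \cap Z(A)$... one checks the inverse lies in $Z(A)$, so $z$ is a unit of $Z(A)$; the case $zA = I_A$ is impossible since $1 \notin I_A$. Hence $Z(A)$ is a field. For the identification $Q_s(A) = M(I_A)$: since $A$ is prime with non-zero ideal $I_A$, the symmetric Martindale quotient ring satisfies $Q_s(A) = Q_s(I_A)$ (the symmetric ring of quotients depends only on a non-zero ideal — this is standard, e.g.\ \cite{AM,BMM}). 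Now $I_A$ is a simple algebra, hence idempotent and with no non-zero annihilators, and for such an algebra the multiplier algebra $M(I_A)$ (double centralizer / idealizer) embeds in $Q_s(I_A)$; one shows equality by checking that every element of $Q_s(I_A)$ is already a multiplier, using that $I_A$ is simple so the only non-zero ideal "dense domain" available is $I_A$ itself, forcing every permissible partially-defined multiplier to be everywhere-defined. Thus $Q_s(A) = Q_s(I_A) = M(I_A)$.

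The main obstacle I expect is part (c), specifically the clean identification $Q_s(A) = M(I_A)$: it requires combining the invariance of $Q_s$ under passage to a non-zero ideal with the special structure of simple algebras, and care is needed because $I_A$ is non-unital in general, so one cannot invoke the unital simple case directly — the argument must genuinely use simplicity to see that the only "dense" ideal of $I_A$ is $I_A$ itself, collapsing partially-defined multipliers to total ones. The trivial-centre claim $Z(I_A) = \{0\}$ in part (b) is the other delicate point, since it is exactly what prevents $I_A$ from being unital (and hence from coinciding with $A$).
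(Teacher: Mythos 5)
Your overall architecture matches the paper's proof closely, but there are a few spots where the written argument does not actually go through as stated.

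In part (b), the inclusion $AJA \subseteq I_A J I_A$ you write for the simplicity argument is false (for unital $A$ one gets $AJA \supseteq J$, not the reverse, and there is no reason for $A$ to land inside $I_A$). You do hedge and gesture at an $I_A = I_A^3$ route; that can indeed be salvaged, since $I_A = I_A^3 = I_A(AJA)I_A = I_A J I_A \subseteq J$ using $I_A A = A I_A = I_A$ and $J \trianglelefteq I_A$. The paper avoids the detour entirely: it observes directly that $I_A J I_A$ is an ideal of $A$, is contained in $J$, and is non-zero because $I_A$ (being an ideal of a prime ring) is prime, so $J^3 \neq \{0\}$; near-simplicity then forces $I_A J I_A = I_A \subseteq J$. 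Your route to $Z(I_A) = \{0\}$ (non-zero centre in a simple ring forces a unit, and a unital ideal of a prime ring is everything) is valid but stated in a garbled way; the paper instead first rules out a unit $e$ of $I_A$ by showing $e$ would be a central idempotent of $A$ making $(1-e)A$ a second non-trivial ideal, then quotes the fact that a non-unital simple ring has trivial centre.

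In part (c), the dismissal of the case $zA = I_A$ ``since $1 \notin I_A$'' is a non sequitur: $zA = I_A$ does not put $1$ in $I_A$, it puts $z$ there. The actual contradiction, and the paper's argument, is that a non-invertible $z \in Z(A)$ has $zA \subseteq I_A$, whence $z \in Z(A)\cap I_A \subseteq Z(I_A) = \{0\}$; this is exactly where $Z(I_A)=\{0\}$ from part (b) is used, so it cannot be skipped. For $Q_s(A) = M(I_A)$, your plan (pass to $Q_s(I_A)$ and identify it with $M(I_A)$ because $I_A$ is simple, so the filter of dense ideals collapses) is a legitimate route; the paper simply cites \cite[Proposition~2.1.3]{AM} together with $I_A^2 = I_A$, which packages the same idea. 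That part of your proposal is a reasonable alternative, and your identification of it as the delicate point is accurate.
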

\begin{proof}
(a) Since the only non-trivial ideal $I_A$ of $A$ satisfies $I_A^2\neq \{0\}$, $A$ is obviously prime.

(b) Assume that $J$ is a non-zero ideal of $I_A$. Since $A$ is prime, $I_A$ is also prime (see e.g. \cite[Lemma~1.1.3]{AM}). In particular, $J^3 \neq \{0\}$, so $I_A J I_A$ is a non-zero ideal of $A$ that is contained in $J$. Then, since $A$ is nearly simple, we have $I_A=I_A J I_A\subseteq J$, and thus $J=I_A$. This shows that $I_A$ is simple.

Assume that $I_A$ is unital with unity $e$. Then by \cite[Lemma 2.54]{INCA} $e$ is a central idempotent of $A$ such that $I_A=eA$. Then $(1-e)A$ is also a non-trivial ideal of $A$, so $I_A=(1-e)A$ and thus $I_A=\{0\}$; a contradiction. Hence, $I_A$ is a non-unital simple algebra and consequently $Z(I_A)=\{0\}$.

Now assume that $I_A$ is finite-dimensional. Then, since $I_A$ is simple, Wedderburn's Theorem (see e.g. \cite[Theorem~2.61]{INCA}) implies that there is a natural number $n$ and a division algebra $\mathbb{D}$ over $\F$ such that $I_A \cong \mathrm{M}_n(\mathbb{D})$. In particular, $I_A$ is unital; a contradiction with the preceding paragraph.

(c) Assume that $z \in Z(A)$ is a non-invertible element. Then $zA$ is an ideal of $A$ such that $zA \neq A$ and hence $zA\subseteq I_A$. In particular, by (b), $$z \in Z(A)\cap I_A\subseteq Z(I_A)=\{0\},$$ 
that is, $z=0$. Thus, $Z(A)$ is a field. Finally, the equality $Q_s(A)=M(I_A)$ is a direct consequence of  \cite[Proposition~2.1.3]{AM} and the fact that $I_A^2=I_A$. 
\end{proof}

We now present some examples of nearly simple algebras.

\begin{example}\label{ex:unit}
Let $A$ be a non-unital simple algebra.  Then its unitization $A^\sharp=\F \times A$ (see e.g. \cite[Section~2.3]{INCA}) is a nearly simple algebra with $I_{A^\sharp}=A$. 

Similarly, set
$$B:=C(A)+A\subseteq Q_s(A).$$
As $A$ is simple and non-unital, we have $Q_s(A)=M(A)$ and  $C(A)\cap A=Z(A)=\{0\}$.  Hence, since $A$ is an ideal of $M(A)$, $B$ is a subalgebra of $M(A)$ such that $B/A\cong C(A)$ (which is a field). Thus, $B$ is a nearly simple algebra with $I_B=A$.
\end{example}

\begin{example}\label{remcc}
Let $V$ be a vector space over $\F$ of countably infinite dimension. Consider the algebra $\E_\F(V)$ of all linear operators on $V$. If by $\FF(V)$ we denote the ideal of finite rank operators in $\E_\F(V)$, it is well-known that $\FF(V)$ is the only non-trivial ideal of $\E_\F(V)$ 
and that $\E_\F(V)=M(\FF(V))$. In particular, by Proposition \ref{prop:bas} (c), $Q_s(\E_\F(V))=\E_\F(V)$, so $\E_\F(V)$ is a nearly simple centrally closed algebra (see also \cite[Example 7.28]{INCA}).

Further, if $D$ is any simple subalgebra of $\E_\F(V)$ that contains the identity operator $1$ (e.g. one of Weyl algebras), define 
$$A:=D+\FF(V)\subseteq \E_\F(V).$$
As $D$ is simple, we have $D \cap \FF(V)=\{0\}$, so $A/\FF(V)\cong D$ is simple and thus $\FF(V)$ is the unique non-trivial ideal of $A$. Hence, $A$ is a nearly simple algebra. Further, since by Proposition \ref{prop:bas} (c) $Q_s(A)=M(\FF(V))=\E_\F(V)$, $C(A)=Z(\E_\F(V))=\F 1$, so $A$ is also centrally closed.
\end{example}

\begin{definition}
If $A$ and $B$ are nearly simple algebras, we say that an ideal $\mathcal{J}$ of  $A \otimes B$ is \emph{admissible} if $\mathcal{J}$ is either trivial or of the form as in (\ref{admid}).
\end{definition}

\begin{lemma}\label{lem:std}
Assume that $A$ and $B$ are two nearly simple algebras such that all ideals of $A \otimes B$ are admissible. Then the algebra $(A/I_A) \otimes B$ is nearly simple, whose only non-trivial ideal is $(A/I_A) \otimes I_B$. Similarly, $A \otimes (B/I_B)$ is nearly simple with a non-trivial ideal  $I_A \otimes (B/I_B)$.
\end{lemma}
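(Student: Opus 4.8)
The plan is to realise $(A/I_A)\otimes B$ as a quotient of $A\otimes B$ and read off its ideals from the hypothesis through the ideal correspondence theorem. By Proposition~\ref{prop:quot}, applied to the ideal $I_A$ of $A$ and the zero ideal of $B$, the map $q_{I_A}\otimes\i_B$ induces an algebra isomorphism $(A\otimes B)/(I_A\otimes B)\cong(A/I_A)\otimes B$. Hence the ideals of $(A/I_A)\otimes B$ are exactly the images under this map of the ideals $\J$ of $A\otimes B$ with $I_A\otimes B\subseteq\J$, and by hypothesis every such $\J$ is admissible. So the first task is to determine which admissible ideals of $A\otimes B$ contain $I_A\otimes B$.

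I claim these are precisely $I_A\otimes B$, $I_A\otimes B+A\otimes I_B$, and $A\otimes B$. The zero ideal is excluded since $I_A\otimes B\neq\{0\}$. To rule out $A\otimes I_B$ and $I_A\otimes I_B$ simultaneously, fix $0\neq a\in I_A$ and $b\in B\setminus I_B$ (possible as $I_B\neq B$); then $a\otimes b\in I_A\otimes B$, yet the canonical map $A\otimes B\to A\otimes(B/I_B)$ — whose kernel is $A\otimes I_B$ by Proposition~\ref{prop:quot} — sends $a\otimes b$ to $a\otimes(b+I_B)\neq 0$, so $a\otimes b\notin A\otimes I_B\supseteq I_A\otimes I_B$. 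Therefore $(A/I_A)\otimes B$ has at most one non-trivial ideal, namely the image of $I_A\otimes B+A\otimes I_B$; since $q_{I_A}\otimes\i_B$ annihilates $I_A\otimes B$, that image equals $(A/I_A)\otimes I_B$.

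It remains to confirm that $(A/I_A)\otimes I_B$ is genuinely non-trivial and that $((A/I_A)\otimes I_B)^2\neq\{0\}$. Non-triviality is the assertion that $I_A\otimes B\subsetneq I_A\otimes B+A\otimes I_B\subsetneq A\otimes B$ with both inclusions strict: for the first, $1\otimes c$ with $0\neq c\in I_B$ lies in $A\otimes I_B$ but not in $I_A\otimes B$ (apply $q_{I_A}\otimes\i_B$ and use $1+I_A\neq 0$ in the unital algebra $A/I_A$); for the second, $(A\otimes B)/(I_A\otimes B+A\otimes I_B)\cong(A/I_A)\otimes(B/I_B)\neq\{0\}$ by Proposition~\ref{prop:quot}. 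For the square, $I_B^2\neq\{0\}$ (Proposition~\ref{prop:bas}) gives $x,y\in I_B$ with $xy\neq 0$, whence $(1\otimes x)(1\otimes y)=1\otimes xy\neq 0$. This shows $(A/I_A)\otimes B$ is nearly simple with unique non-trivial ideal $(A/I_A)\otimes I_B$, and the statement for $A\otimes(B/I_B)$ follows by the symmetric argument since the hypothesis is symmetric in $A$ and $B$. I do not foresee a real obstacle: the whole argument is the correspondence-theorem reduction of the first paragraph together with the elementary membership checks above, all resting on Propositions~\ref{prop:quot} and~\ref{prop:bas}.
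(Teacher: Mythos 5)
Your proof is correct and takes essentially the same route as the paper: both exploit the isomorphism $(A\otimes B)/(I_A\otimes B)\cong(A/I_A)\otimes B$ from Proposition~\ref{prop:quot} together with the ideal correspondence, reducing the question to classifying admissible ideals of $A\otimes B$ that contain $I_A\otimes B$. You are slightly more explicit than the paper in ruling out $A\otimes I_B$ and $I_A\otimes I_B$ as candidates and in verifying that $(A/I_A)\otimes I_B$ is non-trivial with non-zero square, but the underlying argument is the same.
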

\begin{proof}
Let $q_{I_A} : A \to A/I_A$ be the canonical map and consider the algebra epimorphism
$q_{I_A} \otimes \i_B : A \otimes B \to (A/I_A) \otimes B$. Let $\J$ be a non-trivial ideal of $(A/I_A) \otimes B$. Then, since all ideals of $A \otimes B$ are admissible, and $\ker(q_{I_A} \otimes \i_B)=I_A \otimes B$ (Proposition \ref{prop:quot}), $(q_{I_A} \otimes \i_B)^{-1}(\J)$ is an ideal of $A \otimes B$ that strictly contains $I_A \otimes B$ and so $I_A \otimes B +A \otimes I_B \subseteq (q_{I_A} \otimes \i_B)^{-1}(\J)$. Also, since  $\J$ is non-trivial, $\J \neq (A/I_A)\otimes B$, so $(q_{I_A} \otimes \i_B)^{-1}(\J)\neq A \otimes B$.
Consequently,
$$(q_{I_A} \otimes \i_B)^{-1}(\J)=I_A \otimes B +A \otimes I_B$$
and thus
\begin{eqnarray*}
(A/I_A) \otimes I_B &=& (q_{I_A} \otimes \i_B)(I_A \otimes B + A \otimes I_B)\\
&=&  (q_{I_A} \otimes \i_B)((q_{I_A} \otimes \i_B)^{-1}(\J)) \\
&=& \J.
\end{eqnarray*}
The similar argument also shows that $I_A \otimes (B/I_B)$ is the only non-trivial ideal of
$A \otimes (B/I_B)$.
\end{proof}

We now record some non-examples which helped us to conjecture the main result of this paper, Theorem \ref{wltp}.

\begin{example}\label{ex:ex1}
Let $V$ be a real vector space of countably infinite dimension.
Consider $\C$ as a unital subalgebra of $\E_\R(V)$. For example, if $\{e_n : \, n \in \N\}$ is a basis for $V$, define a linear operator $T \in \E_\R(V)$ by $T(e_{2n-1})=e_{2n}$ and  $T(e_{2n})=-e_{2n-1}$ for all $n \in \N$. Then obviously $T^2=-1$, where $1$ is the identity operator, so we can identify $\C$ with the subalgebra $\{\alpha 1+ \beta T : \, \alpha, \beta \in \R\}$ of $\E_\R(V)$.
Set 
$$A:=\C+\FF(V)\subseteq \E_\R(V).$$
By Example \ref{remcc} $A$ is a centrally closed nearly simple (real) algebra with $I_A=\FF(V)$. Consider the tensor product $A \otimes A$. Since by Proposition \ref{prop:quot}
$$(A \otimes A)/(\FF(V) \otimes A + A \otimes \FF(V))\cong (A/\FF(V))\otimes (A/\FF(V))\cong \C \otimes_\R \C$$
and since $\C \otimes_\R  \C$ is not a field, we conclude that $\FF(V) \otimes A + A \otimes \FF(V)$ is not a maximal ideal of $A \otimes A$. In fact, since $\C \otimes_\R \C \cong \C \oplus \C$  (see e.g. \cite[Example 4.45]{INCA}), $\FF(V) \otimes A + A \otimes \FF(V)$ is contained in two distinct maximal ideals of $A \otimes A$.
\end{example}

\begin{example}\label{ex:ex2}
Let $W$ be a complex vector space of countably infinite dimension. Consider the real algebra
$$B:=\R1 + \FF(W) \subseteq \E_\C(W),$$
where $1$ is the identity operator. Then $B$ is a central nearly simple algebra whose only non-trivial ideal is $\FF(W)$.  Note that $B$ is not centrally closed, since by Proposition \ref{prop:bas} (c) $Q_s(B)=M(\FF(W))=\E_\C(W)$ and thus 
$C(B)=Z(\E_\C(W))=\C 1$ (see also \cite[Example 7.37]{INCA}). Let $c_1$ and $c_2$ be elements of $\C \otimes_\R \C$ defined by $c_1:=1\otimes i + i \otimes 1$ and $c_2:=1\otimes i - i \otimes 1$. Then $$\mathcal{J}_1:=c_1 (\FF(W)\otimes \FF(W))\qquad \mbox{and} \qquad \mathcal{J}_2:=c_2 (\FF(W)\otimes \FF(W))$$ are two non-zero ideals of $B \otimes  B$ such that $\J_1 \J_2=\{0\}$. In particular, $\mathcal{J}_1$ and $\mathcal{J}_2$ are non-admissible and $B \otimes B$ is not even prime.
\end{example}

\begin{example}\label{ex:ex3}
Consider the tensor product $A \otimes B$ of real algebras $A$ and $B$ from Examples \ref{ex:ex1} and \ref{ex:ex2}. If $c_1,c_2 \in \C\otimes_\R \C$ are as in Example \ref{ex:ex2}, then using the isomorphism $A/\FF(V)\cong \C$, we see that
$$\mathcal{K}_1:=c_1((A/\FF(V))\otimes \FF(W)) \qquad \mbox{and} \qquad \mathcal{K}_2:=c_2((A/\FF(V))\otimes \FF(W))$$ are two non-zero ideals of $(A/\FF(V))\otimes B$  such that
$\mathcal{K}_1\mathcal{K}_2=\{0\}$. In particular, $(A/\FF(V))\otimes B$  is not prime, so by Lemma \ref{lem:std} $A \otimes B$ has a non-admissible ideal.
\end{example}

We now state the main result of the paper.

\begin{theorem}\label{wltp}
Let $A$ and $B$ be two nearly simple algebras. Then all ideals of  $A \otimes B$ are admissible if and only if all tensor products
\begin{equation}\label{eqtp}
Z(A/I_A)\otimes Z(B/I_B), \ \ C(A)\otimes Z(B/I_B), \ \ Z(A/I_A)\otimes C(B),  \ \ C(A)\otimes C(B)
\end{equation}
are fields.
\end{theorem}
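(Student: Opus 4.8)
The plan is to prove both implications by analysing the lattice of ideals of $A\otimes B$ relative to the filtration determined by the three admissible proper ideals. Write $\bar A:=A/I_A$ and $\bar B:=B/I_B$; since $I_A$ and $I_B$ are the unique maximal ideals of $A$ and $B$, these are unital simple algebras, hence central simple over their centres $K_A:=Z(\bar A)$ and $K_B:=Z(\bar B)$, and their extended centroids coincide with $K_A$ and $K_B$. Put $P:=I_A\otimes I_B$, $Q:=I_A\otimes B$ and $R:=A\otimes I_B$. Using Proposition~\ref{prop:quot} and flatness of tensoring over a field one checks that $Q\cap R=P$, that $QR=RQ=P$, that
$$Q/P\cong I_A\otimes\bar B,\qquad R/P\cong\bar A\otimes I_B,\qquad (A\otimes B)/(Q+R)\cong\bar A\otimes\bar B,$$
and that $(Q+R)/P$ is the direct product of the ideals $Q/P$ and $R/P$ (the cross products vanish because $QR=RQ=P$). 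I shall use two facts from the theory of prime rings and their central closures \cite{AM,BMM}: (i)~for prime $\F$-algebras $U$ and $W$, the algebra $U\otimes_\F W$ is prime if and only if $C(U)\otimes_\F C(W)$ is a domain; and (ii)~for a \emph{simple} $\F$-algebra $S$ with extended centroid $C_S$ and an arbitrary unital $\F$-algebra $T$, the ideals of $S\otimes_\F T$ are in lattice-preserving bijection with those of $C_S\otimes_\F T$ (for $S$ unital central simple this is the classical fact recalled in the Introduction, \cite{INCA}). Since the extended centroid is unchanged on passing to a nonzero ideal, $C(I_A)=C(A)$ and $C(I_B)=C(B)$, so by~(ii) (applied twice where needed) the four simple ``layers'' $P$, $Q/P$, $R/P$ and $(A\otimes B)/(Q+R)$ are simple exactly when, respectively, $C(A)\otimes C(B)$, $C(A)\otimes Z(\bar B)$, $Z(\bar A)\otimes C(B)$ and $Z(\bar A)\otimes Z(\bar B)$ are fields.

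For the sufficiency direction, assume all four tensor products in \eqref{eqtp} are fields. Since $C(A)\otimes_\F C(B)$ is then a field, hence a domain, fact~(i) (together with $C(I_A)=C(A)$ and $C(I_B)=C(B)$) shows that each of $A\otimes B$, $Q$ and $R$ is prime, and fact~(ii) shows that the four layers above are simple. From primeness of $Q$ together with simplicity of $P$ and of $Q/P$ one reads off that the only ring-ideals of $Q$ are $0,P,Q$; likewise those of $R$ are $0,P,R$; and a short further argument—examining the image in $(Q+R)/P\cong(Q/P)\times(R/P)$ of an ideal of $Q+R$ not containing $P$, and using that $Q$ and $R$, being prime, are not nontrivial direct products—shows the ring-ideals of $Q+R$ are exactly $0,P,Q,R,Q+R$. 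Now let $\mathcal J$ be an arbitrary ideal of $A\otimes B$. Its image in the simple ring $\bar A\otimes\bar B$ is $0$ or all of it. If it is $0$, then $\mathcal J\subseteq Q+R$, so $\mathcal J$ is a ring-ideal of $Q+R$ and is therefore admissible. If it is everything, then $\mathcal J+(Q+R)=A\otimes B$ and $(A\otimes B)/\mathcal J\cong(Q+R)/(\mathcal J\cap(Q+R))$; but $(A\otimes B)/\mathcal J$ is unital, whereas $Q+R$ and each of its proper quotients $(Q+R)/P$, $(Q+R)/Q\cong\bar A\otimes I_B$ and $(Q+R)/R\cong I_A\otimes\bar B$ are non-unital—applying a coordinate linear functional to the unital tensor factor would turn a unit in any of these algebras into an identity element of $I_A$ or of $I_B$, contradicting Proposition~\ref{prop:bas}(b). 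Hence $\mathcal J\cap(Q+R)=Q+R$, and so $\mathcal J=A\otimes B$. In every case $\mathcal J$ is admissible.

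For the necessity direction I argue contrapositively. Suppose one of the four commutative rings $X\otimes_\F Y$ in \eqref{eqtp} fails to be a field. Being a nonzero commutative ring, it then contains a nonzero non-unit $c$. In each case $X$ and $Y$ are fields acting on the two tensor factors of the corresponding layer $S\in\{I_A\otimes I_B,\ I_A\otimes\bar B,\ \bar A\otimes I_B,\ \bar A\otimes\bar B\}$—an extended centroid acting on a simple ideal of its multiplier algebra, or a centre acting on a simple algebra—and commuting with the multiplication; thus $S$ is a free, hence faithful, $X\otimes_\F Y$-module, while $X\otimes_\F Y$ acts centrally on the appropriate quotient $T$ of $A\otimes B$ (namely $T=A\otimes B$, $A\otimes\bar B$, $\bar A\otimes B$ or $\bar A\otimes\bar B$) and carries $S$ into $S$. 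Consequently $cS$ is a two-sided ideal of $T$ with $0\subsetneq cS\subsetneq S$. Its preimage $\mathcal L$ under the quotient map $A\otimes B\to T$ is a proper ideal of $A\otimes B$ containing the admissible ideal $\ker(A\otimes B\to T)$ and mapping onto $cS$. A direct check shows that the image in $T$ of any admissible ideal of $A\otimes B$ is, in each case, one of $0,S,T$ (and, only when $T=A\otimes B$, also one of $P,Q,R,Q+R$), none of which equals $cS$; hence $\mathcal L$ cannot be admissible, and this proves the contrapositive.

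I expect the main obstacle to be fact~(ii): the identification of the ideal lattice of $S\otimes_\F T$, for $S$ simple with prescribed extended centroid, with that of $C_S\otimes_\F T$, and the consequent criterion ``$S\otimes_\F T$ is simple if and only if $C_S\otimes_\F C_T$ is a field'' for simple $S,T$. For $S$ central simple over a field this is classical, but for the non-unital simple algebras $I_A$ and $I_B$ (which have zero centre) one must run the argument through the extended centroid and the central closure sitting inside $Q_s$—in effect an Amitsur-type minimal-rank argument on elements of $S\otimes_\F T$—and this is where the genuine content lies. The remainder is bookkeeping with the four-term filtration, together with the elementary verifications that $Q+R$ has no ``diagonal'' ideals and that $Q+R$ and its proper quotients fail to be unital.
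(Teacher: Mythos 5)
Your proposal takes a genuinely different structural route from the paper. The paper proves sufficiency directly: for a non-zero ideal $\mathcal{J}$ it first gets $I_A\otimes I_B\subseteq\mathcal{J}$ from Proposition~\ref{propmax}(a), and then, via a delicate minimal-rank argument on a representing tensor --- normalising the $A$-leg to $1_A$, invoking Amitsur's Lemma to find $x_0\in I_A$ with $\{[a_i,x_0]\}$ independent over $C(A)$, using Lemma~\ref{l2} to rule out finite rank, and hitting the commutant with elementary operators built from \cite[Theorem~2.3.3]{BMM} --- it shows $A\otimes I_B\subseteq\mathcal{J}$ or $I_A\otimes B\subseteq\mathcal{J}$. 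You instead set up the filtration $P\subseteq Q,R\subseteq Q+R\subseteq A\otimes B$, prove the four ``layers'' $P$, $Q/P$, $R/P$, $(A\otimes B)/(Q+R)$ are simple, and then read off the ideal lattice of $A\otimes B$ by showing the ideals of $Q+R$ are exactly $0,P,Q,R,Q+R$ and that the proper quotients $(Q+R)/(\,\cdot\,)$ are all non-unital. Your necessity direction is essentially correct and close in spirit to what Proposition~\ref{propmax} already gives; the free-module point justifying $0\subsetneq cS\subsetneq S$ is sound, and the bookkeeping check that the image of every admissible ideal in each $T$ misses $cS$ is fine.

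The genuine gap is your ``fact~(ii)'': a lattice isomorphism between the ideals of $S\otimes_\F T$ and those of $C_S\otimes_\F T$ for a \emph{non-unital} simple $S$ and arbitrary unital $T$. This is not a theorem in \cite{AM} or \cite{BMM} in that generality, you offer no proof, and --- as you candidly note at the end --- filling it would require precisely the sort of Amitsur-type minimal-rank work that the paper actually carries out. There is also a secondary issue that you apply (ii) to the layer $P=I_A\otimes I_B$, where \emph{both} factors are non-unital, so even your own hypothesis ``$T$ unital'' is not met. As written, then, the sufficiency direction is not a proof.

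That said, your filtration strategy can be completed without (ii) at all, and in a way that is arguably lighter than the paper's own argument. The simplicity of each layer follows from Proposition~\ref{propmax}(a) together with the elementary lemma that the smallest non-zero ideal of a prime ring is a simple ring (if $K\neq 0$ is an ideal of that smallest ideal $S$, then $SKS$ is a non-zero ideal of the ambient prime ring contained in $K$, forcing $K=S$). Applying Proposition~\ref{propmax}(a) to the pairs $(A,B)$, $(A,\bar B)$, $(\bar A, B)$ and Corollary~\ref{tpsimple} to $(\bar A,\bar B)$, with the four field hypotheses of \eqref{eqtp}, yields exactly the simplicity of $P$, $Q/P\cong I_A\otimes\bar B$, $R/P\cong\bar A\otimes I_B$, and $\bar A\otimes\bar B$, and also the primeness of $A\otimes B$ (hence of $Q$ and $R$). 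Your lattice argument then goes through verbatim, and you never need Amitsur's Lemma, Lemma~\ref{l2}, or the normalisation-of-tensor-rank manoeuvre. So the proposal is closer to a correct --- and cleaner --- proof than you give yourself credit for; the flaw is that you routed the crucial simplicity statements through an unestablished general principle rather than through the concrete tools (Theorem~\ref{elten} via Proposition~\ref{propmax}) already on the table.
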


\begin{remark}
In Examples \ref{ex:ex1}, \ref{ex:ex2} and \ref{ex:ex3} (respectively), when considering $A \otimes A$, $B \otimes B$ and $A \otimes B$ (respectively), all tensor products in (\ref{eqtp}) are fields, except $Z(A/I_A)\otimes Z(A/I_A)$, $C(B)\otimes C(B)$ and  $Z(A/I_A)\otimes C(B)$ (respectively). This in particular demonstrates that none of the assumptions of Theorem \ref{wltp} cannot be omitted (if $A$ and $B$ are algebras from Examples \ref{ex:ex1} and \ref{ex:ex2}, then by symmetry $B \otimes A$ has a non-admissible ideal, $C(B)\otimes Z(A/I_A)$ is not a field, while $Z(B/I_B) \otimes Z(A/I_A)$, $Z(B/I_B)\otimes C(A)$ and $C(B)\otimes C(A)$ are fields).
\end{remark}

The proof of Theorem \ref{wltp} heavily relies on the main result of \cite{NW} (see also \cite{JP}) and its consequence which we state below.

\begin{theorem}\cite[Theorem]{NW}\label{elten}
If $A$ and $B$ are prime algebras, then each non-zero ideal of $A \otimes B$ contains a non-zero elementary tensor if and only if $C(A)\otimes C(B)$ is a field.
\end{theorem}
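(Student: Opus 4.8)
My plan rests on Theorem~\ref{elten} together with one elementary remark: if $R_1,R_2$ are simple algebras with $R_i^2\neq\{0\}$, equivalently $R_i=R_i^2$, then any non-zero elementary tensor $r_1\otimes r_2\in R_1\otimes R_2$ generates the whole algebra as an ideal, because $R_1r_1R_1=R_1$ and $R_2r_2R_2=R_2$. Hence, by Theorem~\ref{elten}, for such $R_1,R_2$ the algebra $R_1\otimes R_2$ is simple if and only if $C(R_1)\otimes C(R_2)$ is a field. I would first recast \eqref{eqtp} in terms of extended centroids: $A/I_A$ and $B/I_B$ are simple and unital, so $C(A/I_A)=Z(A/I_A)$ and $C(B/I_B)=Z(B/I_B)$; while $C(A)=Z(M(I_A))=C(I_A)$ and $C(B)=C(I_B)$ by Proposition~\ref{prop:bas}(c). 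Assuming the products in \eqref{eqtp} are all fields, it then follows that $(A/I_A)\otimes(B/I_B)$, $I_A\otimes(B/I_B)$ and $(A/I_A)\otimes I_B$ are simple, and that $(A/I_A)\otimes B$ and $A\otimes(B/I_B)$ each have precisely three ideals ($\{0\}$, the evident middle one, and the whole algebra). For the last point, applied to $(A/I_A)\otimes B$: a non-zero ideal contains a non-zero elementary tensor $s\otimes b$ by Theorem~\ref{elten}; since $A/I_A$ is simple and the non-zero ideals of $B$ are $I_B$ and $B$, the principal ideal $\langle s\rangle\otimes\langle b\rangle$ is $(A/I_A)\otimes I_B$ or $(A/I_A)\otimes B$, and in the first case one passes to the quotient $(A/I_A)\otimes(B/I_B)$ (Proposition~\ref{prop:quot}), which is simple.

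Now assume \eqref{eqtp} and let $\J$ be a non-zero ideal of $A\otimes B$. Since $A$ and $B$ are prime (Proposition~\ref{prop:bas}(a)) and $C(A)\otimes C(B)$ is a field, Theorem~\ref{elten} gives a non-zero elementary tensor $a\otimes b\in\J$; as $A\otimes B$ is unital, $\langle a\otimes b\rangle=\langle a\rangle\otimes\langle b\rangle$, and since $a\neq0$ we have $\langle a\rangle=A$ if $a\notin I_A$ and $\langle a\rangle=I_A$ if $a\in I_A$ (the ideals of $A$ being $\{0\},I_A,A$), and similarly for $b$. Thus $\J$ contains one of $I_A\otimes I_B,\ I_A\otimes B,\ A\otimes I_B,\ A\otimes B$; in particular $\J\supseteq\mathcal N:=I_A\otimes I_B$. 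It therefore suffices to show that the only ideals of $A\otimes B$ containing $\mathcal N$ are $\mathcal N$, $I_A\otimes B$, $A\otimes I_B$, $\mathcal M:=I_A\otimes B+A\otimes I_B$ and $A\otimes B$.

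The plan for this is to analyse such an ideal $\J$ through $\J\cap\mathcal M$, exploiting the ring isomorphism $\mathcal M/\mathcal N\cong\bigl(I_A\otimes(B/I_B)\bigr)\times\bigl((A/I_A)\otimes I_B\bigr)$ (valid because $(I_A\otimes B)\cap(A\otimes I_B)=\mathcal N$ and the two pieces annihilate one another modulo $\mathcal N$, as $(I_A\otimes B)(A\otimes I_B)\subseteq\mathcal N$). Both factors are simple, so the ideals of $\mathcal M/\mathcal N$ are the four obvious ones; since $\J\cap\mathcal M$ is an ideal of $A\otimes B$ squeezed between $\mathcal N$ and $\mathcal M$, it equals $\mathcal N$, $I_A\otimes B$, $A\otimes I_B$ or $\mathcal M$. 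If $\J\cap\mathcal M=\mathcal M$ then $\J\supseteq\mathcal M$ and $\J/\mathcal M$ is an ideal of the simple algebra $(A\otimes B)/\mathcal M\cong(A/I_A)\otimes(B/I_B)$, so $\J=\mathcal M$ or $A\otimes B$. If $\J\cap\mathcal M$ is $I_A\otimes B$ or $A\otimes I_B$, then $\J$ contains $I_A\otimes B$ (resp. $A\otimes I_B$), hence is the full preimage of an ideal of $(A/I_A)\otimes B$ (resp. $A\otimes(B/I_B)$); as such algebras have only three ideals (first paragraph), $\J$ is admissible. Finally, if $\J\cap\mathcal M=\mathcal N$: either $\J\subseteq\mathcal M$, forcing $\J=\mathcal N$, or the image of $\J$ in the \emph{simple} algebra $(A/I_A)\otimes(B/I_B)$ is non-zero, hence everything, so $\J+\mathcal M=A\otimes B$. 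In the latter case $\J$ and $\mathcal M$ are comaximal with $\J\cap\mathcal M=\mathcal N$, whence by the Chinese Remainder Theorem $(A\otimes B)/\mathcal N\cong(A\otimes B)/\J\times(A\otimes B)/\mathcal M$, and under this decomposition the ideal $\mathcal M/\mathcal N$ corresponds to, hence is ring-isomorphic to, $(A\otimes B)/\J$. But $(A\otimes B)/\J$ is unital, while $\mathcal M/\mathcal N\cong\bigl(I_A\otimes(B/I_B)\bigr)\times\bigl((A/I_A)\otimes I_B\bigr)$ is not: indeed $I_A$ is non-unital by Proposition~\ref{prop:bas}(b), and a tensor product of a non-unital algebra with a unital one is non-unital (a unit $\sum_i r_i\otimes e_i$, with the $e_i$ extending $e_0=1$ to a basis of the unital factor, would---after evaluating on $r\otimes1$ from both sides---make $r_0$ a unit of the non-unital factor). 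This contradiction closes the last case. I expect this last step---recognising that the sole potential obstruction is a ``diagonal'' ideal complementing $\mathcal M/\mathcal N$, and excluding it via (non-)unitality---to be the main difficulty of the argument.

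For the converse I would argue as follows. Suppose every ideal of $A\otimes B$ is admissible. By Lemma~\ref{lem:std}, $(A/I_A)\otimes B$ and $A\otimes(B/I_B)$ are nearly simple, in particular prime; and since a nearly simple algebra modulo its unique non-trivial ideal is simple, $(A/I_A)\otimes(B/I_B)$ is simple (via Proposition~\ref{prop:quot}), so by the first paragraph $Z(A/I_A)\otimes Z(B/I_B)$ is a field. Applying Theorem~\ref{elten} to the prime algebras $A\otimes(B/I_B)$, $(A/I_A)\otimes B$ and $A\otimes B$---each non-zero ideal of which plainly contains a non-zero elementary tensor---shows that $C(A)\otimes C(B/I_B)$, $C(A/I_A)\otimes C(B)$ and $C(A)\otimes C(B)$ are fields; after the centroid identifications of the first paragraph these are exactly the remaining three conditions of \eqref{eqtp}.
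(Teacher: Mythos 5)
Your proposal does not prove the statement in question. The statement is Theorem~\ref{elten} itself --- the Nicholson--Watters criterion that, for prime $A$ and $B$, every non-zero ideal of $A\otimes B$ contains a non-zero elementary tensor if and only if $C(A)\otimes C(B)$ is a field. Your argument opens with ``My plan rests on Theorem~\ref{elten}'' and then uses that theorem repeatedly as a black box; what you actually prove is Theorem~\ref{wltp}, the paper's main result. As a proof of Theorem~\ref{elten} this is circular and contains no step addressing either implication. (For the record, the paper does not prove Theorem~\ref{elten} either: it is quoted from \cite{NW}. A genuine proof requires the extended-centroid machinery of that paper: in one direction one shows that if $c\in C(A)\otimes C(B)$ is non-zero and non-invertible and $I,J$ are non-zero ideals with $C(A)I\subseteq A$, $C(B)J\subseteq B$, then $c(I\otimes J)$ is a non-zero ideal of $A\otimes B$ containing no non-zero elementary tensor --- a construction the present paper itself invokes in the proof of Proposition~\ref{propmax}(a); the converse direction rests on identifying the extended centroid of $A\otimes B$ with $C(A)\otimes C(B)$ and using it to manufacture elementary tensors inside an arbitrary non-zero ideal. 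None of this appears in your write-up.)

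That said, what you did write is worth a comment, since it is a genuinely different and, as far as I can check, correct proof of Theorem~\ref{wltp}. The paper handles an ideal $\J$ strictly containing $I_A\otimes I_B$ by a delicate rank-minimality argument: it normalizes a tensor of minimal rank to the form $1_A\otimes b_1+\sum a_i\otimes b_i$ with $a_i\in I_A$, and then uses Amitsur's Lemma, Lemma~\ref{l2} on finite-rank derivations, and \cite[Theorem~2.3.3]{BMM} to strip off the $I_A$-part and contradict minimality. You instead localize the whole problem in the interval between $\mathcal N=I_A\otimes I_B$ and $\mathcal M=I_A\otimes B+A\otimes I_B$, observe that $\mathcal M/\mathcal N$ is a direct product of two simple idempotent rings, and exclude the one dangerous configuration (an ideal comaximal with $\mathcal M$ meeting it in $\mathcal N$) by the Chinese Remainder Theorem together with the non-unitality of $I_A\otimes(B/I_B)$ and $(A/I_A)\otimes I_B$. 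This is more elementary (no derivations, no Amitsur) at the price of leaning harder on Theorem~\ref{elten} applied to the auxiliary pairs $(I_A,B/I_B)$ and $(A/I_A,I_B)$, where you should justify explicitly that $C(I_A)=C(A)$ (true, since $I_A$ is a non-zero ideal of the prime algebra $A$, but it deserves a citation) and that the ideals of a direct product of simple idempotent rings are only the four obvious ones. But none of this answers the question that was asked.
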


\begin{corollary}\label{tpsimple}
If $A$ and $B$ are unital simple algebras, then $A \otimes B$ is a simple algebra if and only if $Z(A)\otimes Z(B)$ is a field.
\end{corollary}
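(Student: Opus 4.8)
The plan is to deduce Corollary~\ref{tpsimple} from Theorem~\ref{elten} together with the basic structure theory available for unital simple algebras. Let $A$ and $B$ be unital simple algebras; then both are prime, and for a unital simple algebra the extended centroid coincides with the centre, i.e. $C(A)=Z(A)$ and $C(B)=Z(B)$. (This is standard: $Q_s(A)=A$ when $A$ is unital and simple, so $C(A)=Z(Q_s(A))=Z(A)$.) Thus the hypothesis "$Z(A)\otimes Z(B)$ is a field" is exactly "$C(A)\otimes C(B)$ is a field", and Theorem~\ref{elten} becomes applicable verbatim.

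For the non-trivial direction, assume $Z(A)\otimes Z(B)$ is a field and let $\J$ be a non-zero ideal of $A\otimes B$; I want to show $\J=A\otimes B$. By Theorem~\ref{elten}, $\J$ contains a non-zero elementary tensor $a\otimes b$ with $a\in A\setminus\{0\}$, $b\in B\setminus\{0\}$. Since $A$ and $B$ are simple and unital, the principal ideals satisfy $\lan a\ran=A$ and $\lan b\ran=B$; writing $1_A=\sum_i u_i a v_i$ and $1_B=\sum_j x_j b y_j$ for suitable $u_i,v_i\in A$, $x_j,y_j\in B$, we get
$$
1_A\otimes 1_B \;=\; \sum_{i,j}(u_i\otimes x_j)(a\otimes b)(v_i\otimes y_j)\;\in\;\J,
$$
and since $1_A\otimes 1_B$ is the identity of $A\otimes B$, this forces $\J=A\otimes B$. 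Hence $A\otimes B$ is simple.

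For the converse, suppose $A\otimes B$ is simple. Then in particular $A\otimes B$ has the property that every non-zero ideal (namely only $A\otimes B$ itself) contains a non-zero elementary tensor, so by Theorem~\ref{elten} the ring $C(A)\otimes C(B)=Z(A)\otimes Z(B)$ is a field.

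The only genuinely delicate point is the identification $C(A)=Z(A)$ for a unital simple algebra; once that is in place the argument is a short formal manipulation and presents no real obstacle. I would state that identification explicitly (it is implicit in the remark "a unital simple algebra is centrally closed if and only if it is central" in the preliminaries) and then the two implications follow as above.
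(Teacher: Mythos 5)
Your proof is correct, and both implications are obtained by a route that differs from the paper's. For the direction ``$Z(A)\otimes Z(B)$ a field $\Rightarrow$ $A\otimes B$ simple'', the paper simply cites \cite[Corollary~1]{NW}; you instead re-derive that corollary from Theorem~\ref{elten}: once a non-zero ideal $\J$ is known to contain a non-zero elementary tensor $a\otimes b$, your factorization of $1_A\otimes 1_B$ through $a\otimes b$ via $\lan a\ran = A$, $\lan b\ran = B$ (essentially the content of Lemma~\ref{lelop} specialized to the simple unital case) forces $\J=A\otimes B$. For the direction ``$A\otimes B$ simple $\Rightarrow$ $Z(A)\otimes Z(B)$ a field'', the paper invokes the isomorphism $Z(A\otimes B)\cong Z(A)\otimes Z(B)$ together with the fact that the centre of a unital simple algebra is a field, while you read the conclusion off the ``only if'' half of Theorem~\ref{elten}, observing that in a simple algebra the unique non-zero ideal trivially contains non-zero elementary tensors. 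Both routes are sound. Yours is arguably more economical in its dependencies, drawing only on Theorem~\ref{elten} (plus the easy identification $C(A)=Z(A)$, $C(B)=Z(B)$, which both you and the paper make explicit), whereas the paper's forward implication avoids the extended-centroid machinery altogether and rests on the elementary centre-of-tensor-product isomorphism.
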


\begin{proof}
If  $A \otimes B$ is simple, then $Z(A) \otimes Z(B) \cong Z(A \otimes B)$ (see e.g. \cite[Corollary~4.32]{INCA}) is a field. 

Conversely, since both $A$ and $B$ are unital and simple, we have $Q_s(A)=A$ and $Q_s(B)=B$, so in particular $C(A)=Z(A)$ and $C(B)=Z(B)$. Hence if $Z(A)\otimes Z(B)$ is a field, then $A \otimes B$ is simple by \cite[Corollary~1]{NW}.
\end{proof}

\begin{lemma}\label{lelop}
Let $A$ and $B$ be algebras and let $\mathcal{J}$ be an ideal of $A\otimes B$.
If $\phi\in \El(A)$ and $\psi \in \El(B)$ are elementary operators, then $(\phi \otimes \psi)(\mathcal{J})\subseteq \mathcal{J}$. 

In particular, if both $A$ and $B$ are unital and $a\otimes b \in \J$ for some $a \in A$ and $b \in B$, then $\lan a \ran \otimes \lan b \ran \subseteq \J$.
\end{lemma}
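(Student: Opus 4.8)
The plan is to reduce the statement to the defining property of a two-sided ideal, by recognising $\phi\otimes\psi$ as an elementary operator on the tensor product algebra $A\otimes B$. First I would fix representations $\phi(x)=\sum_{i=1}^m a_i x b_i$ and $\psi(y)=\sum_{j=1}^n c_j y d_j$ of the given elementary operators. On a simple tensor one computes
$$(\phi\otimes\psi)(x\otimes y)=\sum_{i,j}(a_i x b_i)\otimes(c_j y d_j)=\sum_{i,j}(a_i\otimes c_j)(x\otimes y)(b_i\otimes d_j)=\sum_{i,j}M_{a_i\otimes c_j,\,b_i\otimes d_j}(x\otimes y).$$
Since both the first and the last expression are $\F$-linear in the argument and simple tensors span $A\otimes B$, the identity $(\phi\otimes\psi)(t)=\sum_{i,j}(a_i\otimes c_j)\,t\,(b_i\otimes d_j)$ holds for every $t\in A\otimes B$; that is, $\phi\otimes\psi$ is the elementary operator on $A\otimes B$ with coefficients $a_i\otimes c_j$ and $b_i\otimes d_j$.

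Granting this, the inclusion $(\phi\otimes\psi)(\mathcal J)\subseteq\mathcal J$ is immediate: if $t\in\mathcal J$, then each summand $(a_i\otimes c_j)\,t\,(b_i\otimes d_j)$ lies in $\mathcal J$ because $\mathcal J$ is a two-sided ideal, and hence so does $(\phi\otimes\psi)(t)$. For the final assertion I would use that, when $A$ is unital, the principal ideal $\lan a\ran$ equals $AaA$, and likewise $\lan b\ran=BbB$ when $B$ is unital; therefore $\lan a\ran\otimes\lan b\ran$ is spanned by the elements $(xay)\otimes(ubv)=(x\otimes u)(a\otimes b)(y\otimes v)$ with $x,y\in A$ and $u,v\in B$. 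Assuming $a\otimes b\in\mathcal J$, each such generator lies in $\mathcal J$, so $\lan a\ran\otimes\lan b\ran\subseteq\mathcal J$. (Equivalently, this is the special case of the first part obtained by letting $\phi$ and $\psi$ range over the two-sided multiplications $M_{x,y}$ on $A$ and $M_{u,v}$ on $B$ and evaluating $\phi\otimes\psi$ at $a\otimes b$.)

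I do not expect any genuine obstacle here; the argument is essentially bookkeeping. The only two points deserving a word of care are the passage from simple tensors to arbitrary elements of $A\otimes B$ (handled by linearity, as above, since $\phi\otimes\psi$ and each $M_{a_i\otimes c_j,\,b_i\otimes d_j}$ are linear and agree on a spanning set) and the identification $\lan a\ran=AaA$, which uses the presence of a unit in $A$.
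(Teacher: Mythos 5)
Your proof is correct and follows essentially the same route as the paper's: both identify $\phi\otimes\psi$ as the elementary operator on $A\otimes B$ with coefficients $a_i\otimes c_j$, $b_i\otimes d_j$ and then invoke the two-sided ideal property, and for the final assertion both exploit that, in the unital case, $\lan a\ran = AaA$ is exactly the set of values $\phi(a)$ with $\phi\in\El(A)$. The only cosmetic difference is that the paper phrases the second part in terms of choosing elementary operators with $\phi(a)=x$, $\psi(b)=y$, while you argue directly on generators of $\lan a\ran\otimes\lan b\ran$ — as you yourself note, these are the same argument.
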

\begin{proof}
Assume that $\phi=\sum_i M_{a_i,a_i'}$ and $\psi=\sum_j M_{b_j,b_j'}$ for some finite collections of $a_i,a_i'\in A$ and $b_j,b_j' \in B$. Then
$$\phi \otimes \psi =\sum_j \sum_i M_{a_i \otimes b_j, a_i' \otimes b_j'}\in \El(A \otimes B)$$
and thus $(\phi \otimes \psi)(\mathcal{J})\subseteq \mathcal{J}$.

Next, assume that both $A$ and $B$ are unital. If $a \otimes b \in \J$ for some $a \in A$ and $b \in  B$, then for any $x \in \lan a \ran$ and $y \in \lan b \ran$ there are elementary operators $\phi \in \El(A)$ and $\psi \in \El(B)$ such that $\phi(a)=x$ and $\psi(b)=y$. Hence,   
$$x \otimes y =(\phi \otimes \psi)(a \otimes b) \in \J$$
and consequently $\lan a \ran \otimes \lan b \ran \subseteq \J$.
\end{proof}

\begin{proposition}\label{propmax}
Let $A$ and $B$ be unital prime algebras.
\begin{itemize}
\item[(a)] If both $A$ and $B$ contain the smallest non-zero ideals $I$ and $J$, respectively, then $I \otimes J$ is the smallest non-zero ideal of $A \otimes B$ if and only if  $C(A)\otimes C(B)$ is a field.
\item[(b)] If $M$ and $N$ are maximal ideals of $A$ and $B$, respectively, then $M \otimes B+ A \otimes N$ is a maximal ideal of  $A \otimes B$ if and only if  $Z(A/M)\otimes Z(B/N)$ is a field.
\end{itemize}
\end{proposition}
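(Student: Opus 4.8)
The plan is to reduce both statements to Theorem \ref{elten} together with Corollary \ref{tpsimple}, using Lemma \ref{lelop} as the bridge that upgrades a single elementary tensor inside an ideal to a whole product of principal ideals. For part (a), first suppose $C(A)\otimes C(B)$ is a field. Let $\mathcal{J}$ be any non-zero ideal of $A\otimes B$. By Theorem \ref{elten} it contains a non-zero elementary tensor $a\otimes b$, and by Lemma \ref{lelop} (using that $A$ and $B$ are unital) we get $\lan a\ran\otimes\lan b\ran\subseteq\mathcal{J}$. Since $a\neq 0$ and $b\neq 0$, the principal ideals $\lan a\ran$ and $\lan b\ran$ are non-zero, so $I\subseteq\lan a\ran$ and $J\subseteq\lan b\ran$ by minimality; hence $I\otimes J\subseteq\mathcal{J}$. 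Thus $I\otimes J$ is contained in every non-zero ideal, so it is the smallest non-zero ideal (it is itself non-zero because $A\otimes B$ is faithfully flat, or simply because $I,J\neq\{0\}$). Conversely, if $I\otimes J$ is the smallest non-zero ideal of $A\otimes B$, then every non-zero ideal contains $I\otimes J$ and in particular contains the non-zero elementary tensor $e_I\otimes e_J$ for any non-zero $e_I\in I$, $e_J\in J$; so the condition of Theorem \ref{elten} holds, forcing $C(A)\otimes C(B)$ to be a field.

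For part (b), the natural move is to pass to the quotient. By Proposition \ref{prop:quot},
$$(A\otimes B)/(M\otimes B+A\otimes N)\cong (A/M)\otimes (B/N),$$
and the ideals of $A\otimes B$ containing $M\otimes B+A\otimes N$ correspond bijectively to the ideals of $(A/M)\otimes(B/N)$. Therefore $M\otimes B+A\otimes N$ is a maximal ideal of $A\otimes B$ if and only if $(A/M)\otimes(B/N)$ is a simple algebra. Since $M$ and $N$ are maximal ideals of $A$ and $B$, the quotients $A/M$ and $B/N$ are unital simple algebras, so Corollary \ref{tpsimple} applies and gives that $(A/M)\otimes(B/N)$ is simple if and only if $Z(A/M)\otimes Z(B/N)$ is a field. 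Combining the two equivalences yields the claim.

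I expect the only genuinely delicate point to be the correspondence-of-ideals step in part (b): one must check that an ideal of $A\otimes B$ containing $M\otimes B+A\otimes N$ is proper if and only if its image in $(A/M)\otimes(B/N)$ is proper, and that maximality is preserved in both directions. This is exactly the standard lattice isomorphism between ideals of a quotient ring and ideals of the original ring containing the kernel, so it is routine once Proposition \ref{prop:quot} is in hand; the substantive input is still Theorem \ref{elten}/Corollary \ref{tpsimple}. A minor point in part (a) is verifying that $I\otimes J\neq\{0\}$, which follows since tensor products of vector spaces over a field are exact, so $I\otimes J\hookrightarrow A\otimes B$.
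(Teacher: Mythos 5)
Your proof is correct, and for the converse direction of part (a) it is actually cleaner than the paper's. The paper argues contrapositively by explicitly constructing the ideal $c(I\otimes J)$ for a non-invertible $c\in C(A)\otimes C(B)$ (relying on details from the proof of Nicholson--Watters) and showing that it fails to contain a non-zero elementary tensor, hence cannot contain $I\otimes J$. You instead go directly: if $I\otimes J$ is the smallest non-zero ideal, then every non-zero ideal contains $I\otimes J$ and therefore a non-zero elementary tensor $e_I\otimes e_J$, so the ``only if'' half of Theorem~\ref{elten} forces $C(A)\otimes C(B)$ to be a field. This is logically equivalent and avoids the construction entirely; the paper's route gives a more explicit witness at the cost of invoking internals of \cite{NW}. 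The forward direction of (a) and all of (b) are essentially identical to the paper's argument (Theorem~\ref{elten} plus Lemma~\ref{lelop}, and Proposition~\ref{prop:quot} plus Corollary~\ref{tpsimple} via the lattice isomorphism, respectively).
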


\begin{proof}
(a) Assume that $C(A)\otimes C(B)$ is a field and let $\mathcal{J}$ be a non-zero ideal of $A \otimes B$. By Theorem \ref{elten}, $\mathcal{J}$ contains a non-zero elementary tensor $a \otimes b$. By Lemma \ref{lelop} we have $\lan a \ran \otimes \lan b \ran \subseteq \J$. By assumption, $I \subseteq \lan a \ran$ and $J \subseteq \lan b \ran$, so $I \otimes J \subseteq \lan a \ran \otimes \lan b \ran  \subseteq \mathcal{J}$.

If, on the other hand, $C(A)\otimes C(B)$ is not a field, choose a non-zero non-invertible element $c \in C(A)\otimes C(B)$. Since $I$ is the smallest non-zero ideal of $A$, we have $C(A)I \subseteq A$. Similarly, $C(B)J\subseteq B$. Then, by the proof of \cite[Theorem]{NW}, $c(I \otimes J)$ defines a non-zero ideal of $A \otimes B$ that does not contain a non-zero elementary tensor. In particular, $c(I \otimes J)$ cannot contain $I \otimes J$.

(b) Obviously $M \otimes B + A \otimes N$ is a maximal ideal of $A \otimes B$ if and only if $(A\otimes B)/(M \otimes B + A \otimes N)$ is a simple algebra. Since by Proposition \ref{prop:quot}
$$
(A \otimes B)/(M \otimes B+A \otimes N) \cong (A/M) \otimes (B/N),
$$
by Corollary \ref{tpsimple} $(A\otimes B)/(M \otimes B + A \otimes N)$ is simple if and only if  $Z(A/M) \otimes Z(B/N)$ is a field.
\end{proof}

In the proof of Theorem \ref{wltp} we will use the next version of Amitsur's Lemma (see \cite[Theorem 4.2.7]{BMM}) which states
that if $T_1,\dots, T_n$ are linear operators between vector spaces $V$ and $W$ such that the vectors  $T_1(x),\dots,T_n(x)$ are linearly dependent for every $x\in V$, then a non-trivial linear combination of $T_1, \dots,T_n$ has a finite rank.

We will also use the next fact, which was proved in \cite{BG} (see also \cite{Be,BE}).

\begin{lemma} \cite[Lemma~3.5]{BG}\label{l2}
Let $\delta$ be a non-zero derivation of a simple algebra $D$. If $\delta$ has a finite rank, then $D$ is finite-dimensional.
\end{lemma}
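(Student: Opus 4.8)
The plan is to exploit the finiteness of $\dim_\F\delta(D)$ to extract structure on $D$ in three stages: reduce $D$ to a ring of finite-rank linear operators, cut that ring down to a matrix ring $M_n(\Delta)$ over a division ring, and finally control $\Delta$ by a short ``derivation of a domain'' observation. For the first stage, note that the Leibniz rule alone already manufactures a finite-dimensional one-sided ideal: from $\delta(zx)=\delta(z)x+z\delta(x)$ we get $\delta(z)D=\{\delta(zx)-z\delta(x):x\in D\}\subseteq\delta(D)+z\delta(D)$ for every $z\in D$, so each right ideal $\delta(z)D$ has $\F$-dimension at most $2\dim_\F\delta(D)$. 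Choosing $z_0$ with $d_0:=\delta(z_0)\neq 0$, the right ideal $d_0 D$ is non-zero — otherwise $d_0$ lies in the left annihilator of $D$, a proper ideal, hence $\{0\}$ by simplicity — and finite-dimensional. A non-zero finite-dimensional right ideal contains a minimal right ideal, so $D$ has a non-zero socle; being simple, $D$ then coincides with its socle, which by the classical structure theory of such rings (see, e.g., \cite{INCA}) means $D\cong\FF_\Delta(U)$, the ring of all finite-rank $\Delta$-linear operators on a right vector space $U$ over a division ring $\Delta$.

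For the second stage I would rule out $\dim_\Delta U=\infty$. If it were infinite, fix $0\neq v_0\in\mathrm{im}(d_0)$, say $v_0=d_0(u_0)$. For any $\Delta$-linear functional $g$ on $U$, the rank-one operator $v_0\otimes g\colon x\mapsto v_0\,g(x)$ equals $d_0\circ(u_0\otimes g)$ and hence lies in $d_0 D$; since $g\mapsto v_0\otimes g$ is an injective $\F$-linear map and $\dim_\F U^{\ast}=\infty$, this contradicts $\dim_\F d_0D<\infty$. Hence $n:=\dim_\Delta U<\infty$ and $D\cong M_n(\Delta)$.

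The third stage is the one I expect to be the main obstacle, and I would base it on the following fact about domains, which I would prove first: \textbf{a non-zero derivation $d$ of a division ring $\Delta$ with $\dim_\F d(\Delta)<\infty$ forces $\dim_\F\Delta<\infty$.} Indeed $K:=\ker d$ is a sub-division-ring of $\Delta$ and $d$ is left $K$-linear, so for any $a$ with $d(a)\neq 0$ the map $k\mapsto d(ka)=kd(a)$ is an injective $\F$-linear map $K\to d(\Delta)$; thus $\dim_\F K\le\dim_\F d(\Delta)<\infty$, and since $\Delta/K\cong d(\Delta)$ is finite-dimensional, so is $\Delta$. To apply this to $D\cong M_n(\Delta)$: for a rank-one idempotent $e\in D$, the compression $\delta_e\colon x\mapsto e\delta(x)e$ is a derivation of $eDe\cong\Delta$ with $\delta_e(eDe)\subseteq e\delta(D)e$ finite-dimensional over $\F$. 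If $\delta_e\neq 0$ for some such $e$, the fact above finishes the proof. If $\delta_e=0$ for every rank-one idempotent $e$ (which forces $n\ge 2$, as for $n=1$ the sole compression is $\delta$ itself), then $\delta_{E_{11}}=0$ in particular; by the classical description of the derivations of $M_n(\Delta)$ — every such derivation is inner modulo one acting entrywise by a derivation of $\Delta$, the latter being recoverable up to an inner derivation from the corner compression — $\delta$ must be inner, $\delta(x)=[c,x]$. The conditions $ece\in Z(eDe)$ for all rank-one idempotents $e$, evaluated on $e=E_{pp}$ and on $e=E_{pp}+E_{pq}$, force every entry of $c$ into $F:=Z(\Delta)=Z(D)$; since $\delta\neq 0$ we have $c\notin Z(D)$, so $[c,D]$ is a non-zero left $\Delta$-subspace of $D$ (as $c\in M_n(F)$ commutes with the scalar matrices) sitting inside the finite-dimensional space $\delta(D)$, whence again $\dim_\F\Delta<\infty$. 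In all cases $\dim_\F D=n^2\dim_\F\Delta<\infty$. The recurring subtlety is that ``finite rank'' must be played off against something \emph{one-sided} — a right ideal in the first stage, a one-sided $\Delta$-subspace at the end — and it is exactly this asymmetry that finally bounds the division ring.
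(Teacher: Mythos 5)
Note first that the paper does not prove this lemma at all: it is imported verbatim from \cite[Lemma~3.5]{BG}, so your argument is not competing with an in-text proof but supplying one. What you wrote is essentially correct and is, in outline, the natural route (Leibniz trick to produce a finite-dimensional one-sided ideal, hence a non-zero socle; density/socle structure theory to reduce to $\mathrm{M}_n(\Delta)$; then a separate argument for the division ring), which is also the spirit of the original proof in \cite{BG}. All three stages check out: the identity $\delta(z)x=\delta(zx)-z\delta(x)$ does bound $\dim_\F \delta(z)D$ by $2\dim_\F\delta(D)$; the annihilator argument and semiprimeness of a simple ring legitimize the passage to the socle; the kernel-of-a-derivation argument for division rings is clean and correct; and the Case B analysis (corner compressions forcing the entrywise part to be inner, then forcing $c\in \mathrm{M}_n(Z(\Delta))$, so that $[c,D]$ is a non-zero left $\Delta$-subspace trapped inside the finite-dimensional $\delta(D)$) is verified by direct computation with $e=E_{pp}$ and $e=E_{pp}+E_{pq}$. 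The one imprecision worth fixing is in stage one/two: a simple ring equal to its socle is not the ring of \emph{all} finite-rank $\Delta$-linear operators on $U$, but the ring of finite-rank operators admitting an adjoint relative to a dual pair $(U,U')$ (equivalently, the operators $\sum a_ie\otimes g_{b_i}$ with $g_b(xe)=ebxe$, where $e$ is a minimal idempotent and $U=De$, $U'=eD$). This does not break stage two: you should let $g$ range over $U'$ rather than $U^{\ast}$, and observe that non-degeneracy of the pairing forces $\dim_\Delta U'=\infty$ whenever $\dim_\Delta U=\infty$, so the injection $g\mapsto v_0\otimes g=d_0\circ(u_0\otimes g)$ into the finite-dimensional right ideal $d_0D$ still yields the desired contradiction. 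With that adjustment the proof is complete and self-contained, which is arguably a gain over the paper's bare citation.
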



\begin{proof}[Proof of Theorem \ref{wltp}]
First assume that all ideals of $A \otimes B$ are admissible. Proposition \ref{propmax} then implies that $C(A)\otimes C(B)$ and $Z(A/I_A)\otimes Z(B/I_B)$ are fields. Further, by Lemma \ref{lem:std}, the only non-trivial ideal of $(A/I_A) \otimes B$ is $(A/I_A)\otimes I_B$. In particular, since $(A/I_A) \otimes I_B$ contains non-zero elementary tensors, by  Theorem \ref{elten} $Z(A/I_A)\otimes C(B)$ is a field. A similar argument also shows that $C(A)\otimes Z(B/I_B)$ must be a field.

\smallskip

Now assume that all tensor products in (\ref{eqtp}) are fields and let $\mathcal{J}$ be a non-zero ideal of $A \otimes B$. By Proposition \ref{propmax} (a) we have $I_A \otimes I_B \subseteq \mathcal{J}$.
Assume that $$I_A \otimes I_B \subsetneq  \mathcal{J}.$$
If $q_{I_A} : A \to A/I_A$ and $q_{I_B} : B \to B/I_B$ are the canonical maps, then one of the ideals
$$ (q_{I_A} \otimes \i_B)(\J) \subseteq (A/I_A) \otimes B \qquad \mbox{or} \qquad (\i_A \otimes q_{I_B})(\J) \subseteq A \otimes (B/I_B)$$
must be non-zero, since otherwise
\begin{eqnarray*}
\J &\subseteq& \ker (q_{I_A} \otimes \i_B) \cap \ker (\i_A \otimes q_{I_B}) = (I_A \otimes B) \cap (A \otimes I_B) \\
&=& I_A \otimes I_B.
\end{eqnarray*}
Assume that $(q_{I_A} \otimes \i_B)(\J)$ is a non-zero ideal of $(A/I_A)\otimes B$. By assumption, $Z(A/I_A) \otimes C(B)$ is a field, so by Theorem \ref{elten} $(q_{I_A} \otimes \i_B)(\J)$ contains a non-zero elementary tensor. 

Let $n \geq 1$ be the smallest number with the property that there exists a tensor $t \in \J$ of rank $n$ such that $(q_{I_A} \otimes \i_B)(t)$ is a non-zero elementary tensor in $(A/I_A)\otimes B$. We claim that $n=1$, so that  $a \otimes b \in \J$ for some $a \in A \setminus I_A$ and $b \in B \setminus \{0\}$. In this case, $\lan a \ran =A$ and $I_B \subseteq \lan b \ran$, so by Lemma \ref{lelop} $ \lan a \ran \otimes \lan b \ran\subseteq \J$. In particular,
\begin{equation}\label{eq:inq}
A \otimes I_B \subseteq \J.
\end{equation}

In order to obtain a contradiction, assume that $n>1$. Let $t \in \J$ be any tensor of rank $n$ for which there exist $a' \in A \setminus I_A$ and $b' \in B\setminus\{0\}$ such that
\begin{equation}\label{eq:t1}
(q_{I_A} \otimes \i_B)(t)=(a'+I_A)\otimes b'.
\end{equation}
If $t$ is represented as
\begin{equation}\label{eq:t2}
t=\sum_{i=1}^n a_i \otimes b_i,
\end{equation}
then obviously not all $a_i$ belong to $I_A$. By Lemma \ref{tendec} we may assume that the set $\{a_1, \ldots, a_k\}$ is independent over $I_A$ and that $a_{k+1},\ldots, a_n \in I_A$ for some $1 \leq k \leq n$. Also, since $t$ is of rank $n$, the set $\{b_1, \ldots, b_n\}$ is linearly independent. We first show that $k=1$. Indeed, by (\ref{eq:t1}) and (\ref{eq:t2}) we have
\begin{equation}\label{eq:t3}
\sum_{i=1}^k (a_i + I_A) \otimes b_i = (a'+I_A)\otimes b'
\end{equation}
in $(A/I_A) \otimes B$. Clearly, $b' \in \sp\{b_1, \ldots, b_k\}$, since otherwise the set
$\{b', b_1, \ldots, b_k\}$ would be linearly independent and consequently $a_1, \ldots, a_k \in I_A$; a contradiction. Hence, there are scalars $\la_1, \ldots, \la_k \in \F$ such that $$b'=\sum_{i=1}^k \la_i b_i.$$
Then, by (\ref{eq:t3}),
$$\sum_{i=1}^k (a_i- \la_i a' + I_A) \otimes b_i=0,$$
which forces $a_i- \la_i a' \in I_A$ for all $i=1, \ldots,k$. Since the set $\{a_1, \ldots, a_k\}$ is independent over $I_A$, this is only possible if $k=1$, as claimed. Hence $a_1 \notin I_A$, while $a_2, \ldots, a_n \in I_A$.

Further, without loss of generality we may assume that $a_1=1_A$. Indeed, since $a_1 \notin I_A$, we have $\lan a_1 \ran=A$,  so there is an  elementary operator $\phi\in \El(A)$ such that $\phi(a_1)=1_A$. For $2 \leq i \leq n$ set $a_i':=\phi(a_i)$. Then, $a_2', \ldots , a_n' \in I_A$ and by Lemma \ref{lelop} we have
$$t':=1_A \otimes b_1 + \sum_{i=2}^n a_i' \otimes b_i = (\phi \otimes \mathrm{id}_B)(t)\in \mathcal{J}.$$
Obviously,  $(q_{I_A}\otimes \i_B)(t')=(1_A+ I_A) \otimes b_1 \neq 0$ in $(A/I_A) \otimes B$, so by minimality of $n$,  $t'$ is also a tensor of rank $n$. So if $a_1 \neq 1_A$, we may substitute $t$ by $t'$. Hence, in the sequel we assume that
\begin{equation}\label{eq:31}
t=1_A \otimes b_1 + \sum_{i=2}^n a_i \otimes b_i \in \J,
\end{equation}
where $a_2, \ldots, a_n \in I_A$.

Set $\K:=C(A)$. By Proposition \ref{prop:bas} (c), we have $C(A)=Z(M(I_A))$, so $\K I_A \subseteq I_A$. Hence, we may consider $I_A$ as an algebra over $\K$. Without loss of generality we may assume that $\{a_2, \ldots, a_l\}$ is a maximal $\K$-independent subset of $\{a_2, \ldots, a_n\}$. Then for each $j=l+1, \ldots, n$ there are $\a_{ij} \in \K$ such that
$$a_j=\sum_{i=2}^l \a_{ij} a_i,$$
so by (\ref{eq:31})
\begin{equation}\label{eq:t4}
t=1_A \otimes b_1 + \sum_{i=2}^l a_i \otimes b_i + \sum_{j=l+1}^n \left(\sum_{i=2}^l \a_{ij} a_i\right) \otimes b_j.
\end{equation}

We claim there is an element $x_0 \in I_A$ such that the set $\{[a_2,x_0], \ldots, [a_l,x_0]\}$ is $\K$-independent. Indeed, if this set would be $\K$-dependent for all $x_0 \in I_A$, then by Amitsur's Lemma there are $\b_2, \ldots , \b_l \in \K$ which are not all zero such that the inner derivation $\delta : I_A \to I_A$ defined by
$$\delta(x):=[\b_2 a_2 + \ldots + \b_l a_l, x]$$
has a finite rank. Since by Proposition \ref{prop:bas} (b) $I_A$ is simple and infinite-dimensional, Lemma \ref{l2} implies that $\delta$ is zero. As $Z(I_A)=\{0\}$ (again by Proposition \ref{prop:bas} (b)), we conclude that
$$\b_2 a_2 + \ldots + \b_l a_l =0.$$
Since the set $\{a_2, \ldots, a_l\}$ is $\K$-independent, this yields $\b_2= \ldots = \b_l=0$; a contradiction.

If $x_0\in I_A$ is an element from the preceding paragraph, by (\ref{eq:t4}) we have
\begin{equation}\label{eq:t5}
\mathcal{J} \ni [t,  x_0 \otimes 1_B]= \sum_{i=2}^l [a_i,x_0] \otimes b_i + \sum_{j=l+1}^n \left(\sum_{i=2}^l \a_{ij} [a_i,x_0]\right) \otimes b_j.
\end{equation}
Since $\{[a_2,x_0], \ldots, [a_l,x_0]\}$ is $\K$-independent, by \cite[Theorem 2.3.3]{BMM} for each $i=2, \ldots, l$ there is an elementary operator $\phi_i\in \El(I_A)$ such that $$\phi_i([a_i,x_0])\neq 0 \qquad  \mbox{and} \qquad \phi_i([a_j,x_0])=0 \qquad \forall j\in \{2, \ldots, l\} \setminus \{i\}.$$
Further, since $0 \neq \phi_i([a_i,x_0])\in I_A$ and since $I_A$ is simple (Proposition \ref{prop:bas} (b)), $I_A \phi_i([a_i,x_0])I_A$ is a non-zero ideal of $I_A$ and thus $I_A \phi_i([a_i,x_0])I_A=I_A$. Hence, for each $i=2, \ldots, l$ there is another elementary operator $\psi_i\in \El(I_A)$ such that 
$$\psi_i(\phi_i([a_i,x_0]))=a_i.$$ 
Set $\theta_i:=\psi_i \circ \phi_i$. Then $\theta_i$ is an elementary operator on $I_A$  such that
$$\theta_i([a_i,x_0])=a_i \qquad \mbox{and} \qquad \theta_i([a_j,x_0])=0 \qquad \forall j\in \{2, \ldots, l\} \setminus \{i\}.$$
By extending $\theta_i$ to an elementary operator on $A$ (with the same coefficients), (\ref{eq:t5}) and Lemma \ref{lelop} imply
$$\mathcal{J}\ni(\theta_i \otimes \mathrm{id}_B)([t,x_0 \otimes 1_B])=a_i \otimes b_i + \sum_{j=l+1}^n (\a_{ij}a_i)\otimes b_j,$$
for each $i=2, \ldots, l$, so by (\ref{eq:t4})
$$\mathcal{J} \ni \sum_{i=2}^l \left(a_i \otimes b_i + \sum_{j=l+1}^n ( \a_{ij}a_i)\otimes b_j \right) =t-1_A \otimes b_1.
$$
Since $t \in \mathcal{J}$, we conclude that $1_A \otimes b_1 \in \mathcal{J}$, which contradicts the assumption that $n>1$. In particular, (\ref{eq:inq}) holds. 

The similar arguments would also show that if $(\i_A \otimes q_{I_B})(\J)$ is a non-zero ideal of $A \otimes (B/I_B)$, then $I_A \otimes B \subseteq \J$.

\smallskip

Finally, if  $\mathcal{J}$ strictly contains $I_A \otimes B$ or $A \otimes I_B$ (respectively), then the above proof shows that $\mathcal{J}$ also contains $A \otimes I_B$ or $I_A \otimes B$ (respectively). In any of those two cases we clearly have $I_A \otimes B + A \otimes I_B \subseteq\mathcal{J}$. Since, by assumption, $Z(A/I_A)\otimes Z(B/I_B)$ is a field, Proposition \ref{propmax} (b) implies that $I_A \otimes B + A \otimes I_B$ is a maximal ideal of $A \otimes B$, which finishes the proof.
\end{proof}

\begin{corollary}\label{cor:univ}
Let $A$ be a nearly simple algebra. The following conditions are equivalent:
\begin{itemize}
\item[(i)] For any nearly simple algebra $B$, all ideals of $A \otimes B$ are admissible.
\item[(ii)] All ideals of $A \otimes A$ are admissible.
\item[(iii)] $A$ is centrally closed and $A/I_A$ is central. 
\end{itemize}
\end{corollary}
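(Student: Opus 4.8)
The plan is to derive Corollary \ref{cor:univ} directly from Theorem \ref{wltp}, exploiting the fact that the four tensor products in (\ref{eqtp}) involve only the two fields $C(A)$ and $Z(A/I_A)$ (together with their $B$-counterparts), and that any field is a field after tensoring with itself only under a rigid condition.

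First I would prove the implication (iii) $\Rightarrow$ (i). If $A$ is centrally closed and $A/I_A$ is central, then $C(A)=\F 1$ and $Z(A/I_A)=\F 1$. Hence each of the four tensor products in (\ref{eqtp}) reduces to $\F \otimes Z(B/I_B)\cong Z(B/I_B)$, or $\F\otimes C(B)\cong C(B)$, or $\F\otimes\F\cong\F$; in every case the result is a field (recall that $Z(B/I_B)$ and $C(B)$ are fields: the former because $B/I_B$ is a unital simple algebra whose centre is a field, the latter by \cite[Corollary~2.1.9]{AM}). Theorem \ref{wltp} then gives that all ideals of $A\otimes B$ are admissible, for every nearly simple $B$.

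The implication (i) $\Rightarrow$ (ii) is immediate by taking $B=A$. For the main direction (ii) $\Rightarrow$ (iii), assume all ideals of $A\otimes A$ are admissible. Applying Theorem \ref{wltp} with $B=A$, the four tensor products $Z(A/I_A)\otimes Z(A/I_A)$, $C(A)\otimes Z(A/I_A)$, $Z(A/I_A)\otimes C(A)$ and $C(A)\otimes C(A)$ are all fields. The key point is then the elementary observation: if $\K$ is a field extension of $\F$ and $\K\otimes_\F\K$ is a field, then $\K=\F$. This is exactly the fact recalled in the introduction of the paper --- for any $x\in\K\setminus\F$ the nonzero tensor $1\otimes x - x\otimes 1$ lies in the kernel of the multiplication map $\K\otimes_\F\K\to\K$, hence is a nonzero zero-divisor (or at least a noninvertible element, since it maps to $0$), so $\K\otimes_\F\K$ cannot be a field unless $\K=\F$. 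Applying this with $\K=Z(A/I_A)$ gives $Z(A/I_A)=\F1$, i.e. $A/I_A$ is central; applying it with $\K=C(A)$ gives $C(A)=\F1$. Since $A$ is prime and unital, $Z(A)\subseteq C(A)=\F1$ forces $Z(A)=\F1$ as well, so $A$ is centrally closed. This establishes (iii).

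I do not expect a serious obstacle here: the corollary is essentially a specialization of Theorem \ref{wltp} to the diagonal $B=A$, combined with the standard "$\K\otimes_\F\K$ a field implies $\K=\F$" remark. The only mild care needed is the bookkeeping that $C(A)$ and $Z(A/I_A)$ are indeed fields (so that the phrase "is a field" for the mixed tensor products $C(A)\otimes Z(A/I_A)$ etc. is the right notion to feed into Theorem \ref{wltp}), and noting that $A/I_A$ being central is the same as $Z(A/I_A)=\F1$, while $A$ centrally closed means $C(A)=Z(A)=\F1$ --- here one uses that $Z(A)$ is already a field by Proposition \ref{prop:bas}(c), so $C(A)=\F1$ together with $Z(A)\subseteq C(A)$ gives centrally closed immediately.
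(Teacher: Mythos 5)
Your proof is correct and follows essentially the same route as the paper: the implication (iii)$\Rightarrow$(i) specializes Theorem \ref{wltp} to the case $C(A)=Z(A/I_A)=\F1$, and (ii)$\Rightarrow$(iii) hinges on the observation (stated in the Introduction) that $\K\otimes_\F\K$ is a field only when $\K=\F$, applied to $\K=C(A)$ and $\K=Z(A/I_A)$. The only cosmetic difference is that for (ii)$\Rightarrow$(iii) the paper argues contrapositively through Proposition \ref{propmax}(a) and (b) rather than citing the full Theorem \ref{wltp}; both are valid, and your remark that $Z(A)\subseteq C(A)=\F1$ upgrades $C(A)=\F1$ to ``centrally closed'' is exactly the bookkeeping the paper leaves implicit.
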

\begin{proof}
(i) $\Longrightarrow$ (ii). This is trivial.

(ii) $\Longrightarrow$ (iii). If $A$ would not be centrally closed, $C(A)$ would be a proper field extension of $\F$. But then $C(A)\otimes C(A)$ is not a field, so by Proposition \ref{propmax} (a) $A \otimes A$ has a non-admissible ideal. Similarly, if $A/I_A$ is not central, then $Z(A/I_A)\otimes Z(A/I_A)$ is not a field, so by Proposition \ref{propmax} (b) $A \otimes A$ has a non-admissible ideal.

(iii)  $\Longrightarrow$ (i). By assumption $C(A)\cong Z(A/I_A)\cong \F $, so the assertion follows directly from Theorem \ref{wltp}. 
\end{proof}

\begin{example}
In view of Example \ref{remcc}, if $V$ is a vector space over $\F$ of countably infinite dimension, Corollary \ref{cor:univ} in particular applies to algebras
$\E_\F(V)$ and $D+\FF(V)$, where $D$ is any central simple subalgebra of $\E_\F(V)$ (that $\E_\F(V)/\FF(V)$ is central follows from \cite[Proposition~2.9]{BG}).
\end{example}

\end{document}